\documentclass{article}
\usepackage{amscd}
\usepackage{amsmath}
\usepackage{amssymb}
\usepackage{amsthm}
\usepackage{ascmac}
\usepackage{bm}
\usepackage{graphicx}
\usepackage{here}
\usepackage{mathtools}\numberwithin{equation}{section}
\usepackage{minitoc}
\usepackage{multicol}
\usepackage{multirow}
\usepackage{setspace}
\theoremstyle{definition}
\newtheorem{df}{Definition}

\newtheorem{eg}{Example}
\newtheorem{rmk}{Remark}
\theoremstyle{theorem}

\newtheorem{thm}{Theorem}

\newtheorem{lem}{Lemma}

\title{Several properties of summatory Ehrhart polynomials and series\\
of convex lattice polytopes}
\author{Takashi HIROTSU}
\date{\today}
\begin{document}
\maketitle
\begin{abstract}
In this article, for a convex lattice polytope, we further investigate the summatory function of its Ehrhart polynomial, which is called the summatory Ehrhart polynomial, and introduce its summatory Ehrhart series. 
We prove several fundamental properties of these invariants. 
In particular, we derive a summatory analogue of the classical Ehrhart--Macdonald reciprocity law, which establishes a signed functional equation between the polytope and its relative interior via the substitution $t \mapsto -t-1.$
\end{abstract}
\section{Introduction}
Let $n$ and $d$ be integers such that $0 \leq d \leq n.$ 
For each bounded convex set $S \subset \mathbb R^n$ and integer $t \geq 0,$ the {\itshape lattice-point enumerator} $\mathcal E(S;t)$ for the $t$-th dilate $tS$ of $S$ is defined as 
\[\mathcal E(S;t) := \# (tS\cap \mathbb Z^n).\] 
Throughout this article, let $P \subset \mathbb R^n$ be a convex lattice polytope with $\dim P = d$ and $\mathrm{codeg}\,P = c,$ where 
\[\mathrm{codeg}\,P := \min\{ t \in \mathbb Z_{> 0} \mid tP^\circ\cap\mathbb Z^n \neq \varnothing\}.\] 
\begin{thm}[{\cite[Theorem 3.8 and Corollaries 3.15, 3.17, and 3.20]{BR15}}]\label{thm-ehr-poly}
The lattice-point enumerator $\mathcal E(P;t)$ of $P$ is a polynomial in $t$ of the form 
\[\mathcal E(P;t) = c_dt^d+c_{d-1}t^{d-1}+\cdots +c_1t+c_0,\] 
where $d!c_i \in \mathbb Z$ for any $i \in \{ 0,1,\dots,d\},$ $c_d = \mathrm{vol}\,P,$ and $c_0 = 1.$
\end{thm}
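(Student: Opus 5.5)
We prove polynomiality, the integrality of $d!c_i$, and the values of $c_0$ and $c_d$ in turn, by triangulating $P$ into lattice simplices and computing the Ehrhart series of each open simplex via cones.

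\textbf{Step 1 (decomposition).} Triangulate $P$ using only its vertices, so every simplex is a lattice simplex. Since $P$ is the disjoint union of the relative interiors of the faces of the triangulation (the empty face excluded), we get
\[\mathcal E(P;t)=\sum_{\Delta}\mathcal E(\Delta^\circ;t)\]
for $t\ge 1$, where $\Delta$ runs over the nonempty faces. So it suffices to handle the relative interior of a single lattice $k$-simplex $\Delta$ with vertices $v_0,\dots,v_k$.

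\textbf{Step 2 (cone over a simplex).} Form $\mathrm{cone}(\Delta)$, the cone in $\mathbb R^{n+1}$ spanned by the vectors $(v_i,1)$. Its interior lattice points are uniquely of the form
\[p+\sum_i m_i (v_i,1), \qquad m_i\in\mathbb Z_{\ge0},\]
where $p$ ranges over the finitely many lattice points of the half-open parallelepiped $\{\sum\lambda_i (v_i,1) : 0<\lambda_i\le1\}$. Grading by the last coordinate gives
\[\sum_{t\ge1}\mathcal E(\Delta^\circ;t)z^t=\frac{h^*_\Delta(z)}{(1-z)^{k+1}},\]
where $h^*_\Delta$ is a polynomial with nonnegative integer coefficients, supported in degrees $1,\dots,k+1$. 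Expanding $(1-z)^{-(k+1)}=\sum_t\binom{t+k}{k}z^t$ shows that $\mathcal E(\Delta^\circ;t)$ is a $\mathbb Z$-combination of the binomial coefficients $\binom{t+k-j}{k}$ for $t\ge1$. Each of these is a polynomial in $t$ for $t\ge 1$ because $j\le k+1$. The same expression extends to $t=0$, with value $0$ for the open simplex, which is consistent with $\mathcal E(P;0)=1$ once the vertices are counted.

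\textbf{Step 3 (coefficients).} Each $\binom{t+k-j}{k}$ has the form $\frac{1}{k!}\cdot(\text{integer polynomial})$. Since $k!\mid d!$ for $k\le d$, summing over faces gives a polynomial of degree at most $d$ with $d!c_i\in\mathbb Z$.

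For $c_0$, one route is to evaluate each open-face contribution at $t=0$ and apply the Euler relation for the triangulation. Cleaner is the direct argument: as a polynomial, $\mathcal E(\Delta^\circ;0)=(-1)^k$ (this follows from the cone computation, or from reciprocity on a simplex). Then
\[\mathcal E(P;0)=\sum_\Delta(-1)^{\dim\Delta}=\chi(P)=1,\]
since $P$ is contractible. Avoiding the use of topology here is the main obstacle. Instead, I would compute $c_0$ directly: the polynomial $\sum_{t\ge0}\mathcal E(\Delta;t)z^t=h^*(z)/(1-z)^{k+1}$ for a closed simplex has constant term $\mathcal E(\Delta;0)=1$. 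Then glue the closed simplices by inclusion–exclusion over the triangulation, which again reduces to the Euler characteristic of the simplicial complex.

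For $c_d$, compare $\mathcal E(P;t)/t^d$ with Riemann sums. The points of $\frac1t\mathbb Z^n$ lying in $P$ approximate $\mathrm{vol}\,P$, measured with respect to the lattice in the affine span of $P$, with error $O(1/t)$, since the boundary is a finite union of lower-dimensional polytopes. Hence $c_d=\mathrm{vol}\,P$.
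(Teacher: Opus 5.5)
The paper does not prove this statement itself; it only cites Beck--Robins. Your outline is essentially the textbook argument behind that citation: triangulate without new vertices, cone over each lattice simplex and read the series off the fundamental parallelepiped, expand in the binomials $\binom{t+k-j}{k}$ to get polynomiality and $d!c_i\in\mathbb Z$, and use Riemann sums for $c_d$. It is correct, with one false remark to delete and one comment on $c_0$.

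\textbf{The remark in Step 2.} The last sentence of Step 2 is wrong. The polynomial $\sum_{j=1}^{k+1}h_j\binom{t+k-j}{k}$ attached to an open $k$-simplex does not vanish at $t=0$. The terms with $1\le j\le k$ do vanish there. But $h_{k+1}=1$, since the only parallelepiped point at height $k+1$ is $\sum_i(v_i,1)$, and $\binom{t-1}{k}$ equals $(-1)^k$ at $t=0$. What vanishes is only the $z^0$-coefficient of a series that starts at $t=1$, and that says nothing about the polynomial. Your Step 3 correctly uses $(-1)^k$, so the Step 2 remark can simply go.

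\textbf{The constant term.} Invoking $\sum_\Delta(-1)^{\dim\Delta}=\chi(P)=1$ is legitimate. You are also right that closed-simplex inclusion--exclusion does not escape it: if $\Delta\cap\Delta'=\varnothing$, then $0\cdot(\Delta\cap\Delta')=\varnothing$ while $0\Delta\cap 0\Delta'=\{0\}$, so that counting identity also holds only for $t\ge 1$.

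If you want a purely lattice-point argument, use a half-open decomposition (K\"{o}ppe--Verdoolaege). For generic $q\in P^\circ$, delete from each maximal simplex the facets whose hyperplanes separate it from $q$.
\begin{itemize}
\item These half-open simplices partition $P$, and exactly one of them, the one containing $q$, is closed.
\item Every other one has between $1$ and $d$ facets deleted: at least one because it misses $q$, at most $d$ because the barycentric coordinates of $q$ cannot all be negative.
\item For such a cell, the parallelepiped has $\lambda_i\in(0,1]$ for vertices opposite deleted facets and $\lambda_i\in[0,1)$ otherwise. So its lattice points lie only at heights $1,\dots,d$, and its polynomial $\sum_j h_j\binom{t+d-j}{d}$ vanishes at $t=0$.
\end{itemize}
Summing gives $c_0=1$ with no Euler characteristic. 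The nonnegativity of the coefficients of $h^*(P;z)$ comes for free.
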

The polynomial $\mathcal E(P;t)$ in Theorem \ref{thm-ehr-poly} is referred to as the {\itshape Ehrhart polynomial} of $P.$ 
Let $P^\circ$ denote the relative interior of $P.$ 
The {\itshape Ehrhart series} $\mathrm{Ehr}(P;z)$ of $P$ and $\mathrm{Ehr}(P^\circ;z)$ of $P^\circ$ are defined by 
\[\mathrm{Ehr}(P;z) := \sum_{t = 0}^{\infty}\mathcal E(P;t)z^t \quad\text{and}\quad \mathrm{Ehr}(P^\circ;z) := \sum_{t = 1}^{\infty}\mathcal E(P^\circ;t)z^t\] 
Furthermore, the {\itshape $h^*$-polynomial} $h^*(P;z)$ of $P$ is defined by 
\[ h^*(P;z) := (1-z)^{d+1}\mathrm{Ehr}(P;z).\] 
The codegree $c$ is related to this polynomial via 
\[ c = d+1-\deg h^*(P;z).\] 
We recall the following definition and theorem.
\begin{df}[{\cite[Definition 2]{Hir26}}]
Let $P \subset \mathbb R^n$ be a convex lattice polytope, and let $t \geq 0$ be an integer.
\begin{enumerate}
\item[(1)]
We call a polytope $G \subset \mathbb R^n$ a {\itshape giganture} of $P$ in $tP$ if $G$ is contained in $tP$ and similar to $P,$ allowing a scale factor of $0.$ 
In particular, when the scale factor is $0,$ the giganture degenerates into a single point, which is referred to as a {\itshape degenerate} giganture.
\item[(2)]
A giganture $G$ of $P$ is said to be {\itshape horizontal} if $P$ is transformed into $G$ by translation and nonnegative integral rescaling (or equivalently, if $G = iP+a$ for some integer $i \geq 0$ and some vector $a \in \mathbb Z^n$).
\item[(3)]
Let $\mathcal H(P;t)$ denote the number of horizontal gigantures of $P$ in $tP.$
\end{enumerate}
\end{df}
Let $V \subset \mathbb Z^n$ be the set of all vertices of $P,$ and let $\mathrm{Pyr}\,P$ denote the convex hull of $\{ (v,0) \mid v \in V\}\cup\{ (0,\dots,0,1)\}$ in $\mathbb R^{n+1}.$ 
\begin{thm}[{\cite[Theorem 6]{Hir26}}]\label{thm-sep-poly}
The function $\mathcal H(P;t)$ is a polynomial in $t$ of the form 
\begin{align} 
\mathcal H(P;t) &= \sum_{i = 0}^{t}\mathcal E(P;i) \label{eq-sep-epsum} \\ 
&= \mathcal E(\mathrm{Pyr}\,P;t) \label{eq-sep-eppyr} \\ 
&= a_{d+1}t^{d+1}+a_{d}t^{d}+\cdots +a_{1}t+a_{0}, \notag 
\end{align} 
where $(d+1)!a_i \in \mathbb Z$ for any $i \in \{ 0,1,\dots,d+1\},$ $a_{d+1} = \mathrm{vol}\,P/(d+1),$ and $a_{0} = 1.$
\end{thm}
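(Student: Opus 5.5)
The plan is to prove the three displayed equalities in order and then read off the coefficient data from \eqref{eq-sep-eppyr} together with Theorem \ref{thm-ehr-poly} applied to $\mathrm{Pyr}\,P$.

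\textbf{Step 1: the first equality \eqref{eq-sep-epsum}.} Here I build a bijection between horizontal gigantures and pairs $(i,a)$. A horizontal giganture $G = iP+a$ with $0\le i\le t$ and $a\in\mathbb Z^n$ lies in $tP$ if and only if $a \in tP \ominus iP$, the Minkowski difference. For convex $P$ this difference is $(t-i)P$: one inclusion is clear, and the other follows from support functions, since $h_{tP}=t h_P$. So for each fixed $i$ the admissible translations are exactly the points of $(t-i)P\cap\mathbb Z^n$. I also need distinct pairs $(i,a)$ to give distinct sets $G$. This holds because $i$ is recovered from the volume of $G$ relative to $\mathrm{aff}\,P$ (or from its width when $d\ge 1$), and then $a$ is determined. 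The degenerate case $i=0$ gives single lattice points, and $i>t$ is impossible. Summing over $i$ gives $\sum_{i=0}^t \mathcal E(P;t-i)=\sum_{i=0}^t\mathcal E(P;i)$.

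\textbf{Step 2: the second equality \eqref{eq-sep-eppyr}.} I slice $t\,\mathrm{Pyr}\,P$ by the hyperplanes $x_{n+1}=k$ for $k=0,\dots,t$. A point $(x,k)$ lies in $t\,\mathrm{Pyr}\,P$ exactly when $x\in (t-k)P$, because the pyramid is $\{((1-s)p,s): p\in P,\ 0\le s\le1\}$. Hence the lattice points at height $k$ number $\mathcal E(P;t-k)$, and summing over $k$ recovers the sum in Step 1. This equality alone already shows that $\mathcal H(P;t)$ is a polynomial in $t$.

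\textbf{Step 3: the coefficient data.} The pyramid $\mathrm{Pyr}\,P$ is a lattice polytope of dimension $d+1$, so Theorem \ref{thm-ehr-poly} gives degree $d+1$, $(d+1)!a_i\in\mathbb Z$, and $a_0=1$. For the leading coefficient I would avoid computing the normalized volume of the pyramid directly, since the relative-lattice normalization in $\mathrm{aff}\,\mathrm{Pyr}\,P$ is the only subtle point. Instead I use \eqref{eq-sep-epsum}: with $\mathcal E(P;i)=c_d i^d+O(i^{d-1})$, Faulhaber's formula gives $\sum_{i=0}^t \mathcal E(P;i)=\frac{c_d}{d+1}t^{d+1}+O(t^d)$. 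Therefore $a_{d+1}=\mathrm{vol}\,P/(d+1)$.

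I expect the main obstacle to be the injectivity and containment claims in Step 1 for lower-dimensional $P$ ($d<n$). There the Minkowski difference identity should be checked within $\mathrm{aff}$ directions, and the requirement $a\in\mathbb Z^n$ must match lattice points of $(t-i)P$. Both points follow because the vertices of $iP$ are lattice points.
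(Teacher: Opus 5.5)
Your proof is correct and follows essentially the route the paper relies on. The paper quotes the statement from \cite{Hir26} without proof, but its Lemma~\ref{lem-hori-sub} is the interior version of your criterion $iP+a\subset tP\iff a\in(t-i)P$, and its use of $\mathrm{Ehr}(\mathrm{Pyr}\,P;z)=\frac{1}{1-z}\mathrm{Ehr}(P;z)$ from \cite[Theorem 2.4]{BR15} is exactly your slicing of $t\,\mathrm{Pyr}\,P$ by the hyperplanes $x_{n+1}=k$.

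The only caveat is $d=0$. There your injectivity and ``$i>t$ is impossible'' claims fail, because every $iP+a$ is a single point. In that case $\mathcal H(P;t)=t+1$ holds only if gigantures are counted as pairs $(i,a)$ with $0\le i\le t$, so this is a degeneracy of the definition rather than a flaw in your argument.
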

\begin{rmk}
Let $\mathcal H^+(P;t)$ denote the number of nondegenerate horizontal gigantures of $P$ in $tP$ for each integer $t > 0.$ 
Then, Theorem \ref{thm-sep-poly} yields that $\mathcal H^+(P;t)$ is also a polynomial in $t$ and given by 
\[\mathcal H^+(P;t) = \mathcal H(P;t)-\mathcal E(P;t) = \sum_{i = 0}^{t-1}\mathcal E(P;i) = \mathcal E(\mathrm{Pyr}\,P;t-1).\] 
Furthermore, the leading coefficient of $\mathcal H^+(P;t)$ is also $\mathrm{vol}\,P/(d+1)$ and its constant term vanishes (see \cite[Remark 4]{Hir26}).
\end{rmk}
Based on Theorem \ref{thm-sep-poly}, we refer to $\mathcal H(P;t)$ as the {\itshape summatory Ehrhart polynomial} of $P.$
\begin{df}
Let $\mathcal H(P^\circ;t)$ denote the number of horizontal gigantures of $P$ in $tP^\circ$ for each integer $t > 0.$
\end{df}
\begin{df}
We define the {\itshape summatory Ehrhart series} $\mathrm{SE}(P;z)$ of $P$ and $\mathrm{SE}(P^\circ;z)$ of $P^\circ$ by 
\[\mathrm{SE}(P;z) := \sum_{t = 0}^{\infty}\mathcal H(P;t)z^t \quad\text{and}\quad \mathrm{SE}(P^\circ;z) := \sum_{t = 1}^{\infty}\mathcal H(P^\circ;t)z^t.\] 
Furthermore, let 
\[ h^{\#}(P;z) := (1-z)^{d+2}\mathrm{SE}(P;z).\] 
\end{df}
In the author's previous work (see \cite{Hir26}), the $d$-dimensional volume $\mu_d^{\mathrm{nl}}(P)$ of a normal-sized miniature of $P$ was explicitly determined as $\mu_d^{\mathrm{nl}}(P) = \binom{2d+1}{d}^{-1}\mathrm{vol}_d(P)$ on the basis of Theorem \ref{thm-sep-poly}. 
Later, this formula was generalized to its $r$-dimensional volume $\mu_r^{\mathrm{nl}}(P)$ as $\mu_r^{\mathrm{nl}}(P) = \binom{d+r+1}{r}^{-1}\mathrm{vol}_r(P)$ for $r \in \{ 0,1,\dots,d\}.$ 
In this article, we establish several properties of the summatory Ehrhart polynomial and series. 
The main results are as follows. 
In particular, we obtain the following reciprocity law by viewing $\mathcal H(P;t)$ as a polynomial.
\begin{thm}\label{thm-sep-recip}
We have 
\begin{align} 
\mathcal H(P^\circ;t) &= \mathcal E((\mathrm{Pyr}\,P)^\circ;t+1) \label{eq-sepi-epi} \\ 
&= (-1)^{d+1}\mathcal H(P;-t-1). \label{eq-sepi-sep} 
\end{align} 
\end{thm}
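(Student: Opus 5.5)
The plan is to reduce both identities to results already available: first express $\mathcal H(P^\circ;t)$ as a sum of interior Ehrhart values, then identify that sum with an interior count for $\mathrm{Pyr}\,P$, and finally apply the classical Ehrhart--Macdonald reciprocity to the $(d+1)$-dimensional lattice polytope $\mathrm{Pyr}\,P$.

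\textbf{Step 1 (counting interior gigantures).} I will show that for an integer $t>0$,
\[
\mathcal H(P^\circ;t)=\sum_{i=0}^{t-1}\#\bigl(\{a\in\mathbb Z^n \mid iP+a\subset tP^\circ\}\bigr)=\sum_{i=0}^{t-1}\mathcal E(P^\circ;t-i)=\sum_{j=1}^{t}\mathcal E(P^\circ;j).
\]
The term $i=t$ does not occur, since $tP+a\subset tP^\circ$ is impossible.

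The key claim is that, for $0\le i<t$, we have $iP+a\subset tP^\circ$ if and only if $a\in (t-i)P^\circ$. To prove it, describe $P$ inside its affine hull by finitely many facet inequalities $\langle u_k,x\rangle\le b_k$ together with affine equations. Then $tP$ is cut out by $\langle u_k,x\rangle\le tb_k$, and $tP^\circ$ by the corresponding strict inequalities inside $\mathrm{aff}(tP)$.

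Since $\max_{x\in P}\langle u_k,x\rangle=b_k$, the containment $a+iP\subset tP^\circ$ is equivalent to two conditions:
\begin{itemize}
\item[(i)] $\langle u_k,a\rangle+ib_k<tb_k$ for every $k$;
\item[(ii)] $a+iP$ lies in $\mathrm{aff}(tP)$, which amounts to $a\in\mathrm{aff}((t-i)P)$ by comparing the affine equations.
\end{itemize}
Together these conditions say exactly that $a\in(t-i)P^\circ$. The case $i=0$ is trivial. I expect this step, in particular handling the affine hull correctly when $d<n$, to be the only place needing care.

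\textbf{Step 2 (slicing the pyramid), giving (\ref{eq-sepi-epi}).} The dilate $(t+1)\,\mathrm{Pyr}\,P$ has slice at height $k$ equal to $(t+1-k)P\times\{k\}$, because it is the convex combination of $(t+1-k)P\times\{0\}$ and the apex $k(0,\dots,0,1)$.

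The relative interior of $(t+1)\,\mathrm{Pyr}\,P$ meets the hyperplane at height $k$ only when $0<k<t+1$. For such $k$, the intersection is $((t+1-k)P)^\circ\times\{k\}$, which is the standard fact that the relative interior of a pyramid consists of the interior points of the intermediate slices.

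Counting lattice points level by level, and noting that lattice points must have integer height $k\in\{1,\dots,t\}$, gives
\[
\mathcal E((\mathrm{Pyr}\,P)^\circ;t+1)=\sum_{k=1}^{t}\mathcal E(P^\circ;t+1-k)=\sum_{j=1}^{t}\mathcal E(P^\circ;j),
\]
which agrees with Step 1.

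\textbf{Step 3 (reciprocity), giving (\ref{eq-sepi-sep}).} The polytope $\mathrm{Pyr}\,P$ is a lattice polytope of dimension $d+1$, so the Ehrhart--Macdonald reciprocity law gives
\[
\mathcal E((\mathrm{Pyr}\,P)^\circ;s)=(-1)^{d+1}\mathcal E(\mathrm{Pyr}\,P;-s)
\]
as polynomials in $s$. Setting $s=t+1$, and using the polynomial identity $\mathcal E(\mathrm{Pyr}\,P;t)=\mathcal H(P;t)$ from (\ref{eq-sep-eppyr}) of Theorem \ref{thm-sep-poly} evaluated at $-t-1$, yields
\[
\mathcal H(P^\circ;t)=(-1)^{d+1}\mathcal H(P;-t-1).
\]
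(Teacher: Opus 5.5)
Your proposal is correct and follows the paper's proof. It uses the same count of interior gigantures by scale factor (the paper's Lemma~\ref{lem-hori-sub}), the same identification $\sum_{j=1}^{t}\mathcal E(P^\circ;j)=\mathcal E((\mathrm{Pyr}\,P)^\circ;t+1)$, and the same application of Ehrhart--Macdonald reciprocity to $\mathrm{Pyr}\,P$ combined with \eqref{eq-sep-eppyr}. Your facet-inequality and slicing arguments just make explicit the steps the paper states tersely.

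One caveat, which the paper shares because its lemma silently stops at $i=t-1$: your claim that $tP+a\subset tP^\circ$ is impossible holds only for $d\ge 1$. For $d=0$ the relative interior of a point is the point itself, so $a=0$ gives an extra giganture. Counting scale factors $0,\dots,t$ as in Theorem~\ref{thm-sep-poly}, the left side is then $t+1$ while the right side is $t$, so the statement should be read with $d\ge 1$.
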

\begin{thm}\label{thm-ses-ser}
We have 
\begin{align} 
\mathrm{SE}(P;z) &= \mathrm{Ehr}(\mathrm{Pyr}\,P;z) \notag \\ 
&= \frac{1}{1-z}\mathrm{Ehr}(P;z) \label{eq-ses-es} \\ 
&= \frac{h^*(P;z)}{(1-z)^{d+2}}. \label{eq-ses-h} 
\end{align} 
\end{thm}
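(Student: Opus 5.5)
The statement to prove is Theorem \ref{thm-ses-ser}. The plan is to derive all three expressions directly from Theorem \ref{thm-sep-poly}, working with formal power series in $z$.

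\emph{First equality.} By \eqref{eq-sep-eppyr}, $\mathcal H(P;t) = \mathcal E(\mathrm{Pyr}\,P;t)$ for every integer $t \geq 0$. Multiplying by $z^t$ and summing over $t \geq 0$ gives $\mathrm{SE}(P;z) = \mathrm{Ehr}(\mathrm{Pyr}\,P;z)$ coefficientwise.

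\emph{Equality \eqref{eq-ses-es}.} Use \eqref{eq-sep-epsum}, $\mathcal H(P;t) = \sum_{i=0}^{t}\mathcal E(P;i)$. This is the Cauchy product of the sequences $(\mathcal E(P;i))_{i\ge0}$ and $(1)_{j\ge 0}$. Hence
\[
\mathrm{SE}(P;z) = \sum_{t\ge0}\sum_{i=0}^{t}\mathcal E(P;i)z^{i}z^{t-i} = \Big(\sum_{i\ge0}\mathcal E(P;i)z^i\Big)\Big(\sum_{j\ge0}z^j\Big) = \frac{1}{1-z}\mathrm{Ehr}(P;z).
\]
The interchange of summation is legitimate as an identity of formal power series, since each coefficient is a finite sum. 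It is also valid analytically for $|z|<1$, because $\mathcal E(P;t)$ grows polynomially.

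\emph{Equality \eqref{eq-ses-h}.} Substitute the definition $h^*(P;z) = (1-z)^{d+1}\mathrm{Ehr}(P;z)$, that is, $\mathrm{Ehr}(P;z) = h^*(P;z)/(1-z)^{d+1}$, into \eqref{eq-ses-es}. This gives $h^*(P;z)/(1-z)^{d+2}$.

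As a by-product, $h^{\#}(P;z) = h^*(P;z)$. One could also note consistency with the classical fact that $h^*(\mathrm{Pyr}\,P;z) = h^*(P;z)$, since $\dim \mathrm{Pyr}\,P = d+1$. That fact is not needed here, however, because the first equality already comes from \eqref{eq-sep-eppyr}.

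There is no real obstacle. The only point needing care is justifying the exchange of summation in the Cauchy product, and the formal power series setting handles it.
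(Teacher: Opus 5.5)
Your proof is correct. It differs from the paper's only in how you obtain the middle identity \eqref{eq-ses-es}.

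The paper runs everything through the pyramid. It writes $\mathrm{SE}(P;z)=\mathrm{Ehr}(\mathrm{Pyr}\,P;z)$ via \eqref{eq-sep-eppyr}, then quotes the standard pyramid identity $\mathrm{Ehr}(\mathrm{Pyr}\,P;z)=\frac{1}{1-z}\mathrm{Ehr}(P;z)$ from \cite[Theorem 2.4]{BR15}. You instead get the factor $\frac{1}{1-z}$ directly from the partial-sum formula \eqref{eq-sep-epsum} by a Cauchy product. In your version the first equality is a separate, parallel consequence of \eqref{eq-sep-eppyr}, not a link in the chain.

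Your route is self-contained given Theorem \ref{thm-sep-poly} and needs no external citation. It even recovers the pyramid identity as a by-product, since $\mathrm{Ehr}(\mathrm{Pyr}\,P;z)=\mathrm{SE}(P;z)=\frac{1}{1-z}\mathrm{Ehr}(P;z)$. The paper's route is a one-line citation. Both rest on the same slicing of $t\,\mathrm{Pyr}\,P$ into copies of the dilates $(t-k)P$ at heights $k$, which is exactly why \eqref{eq-sep-epsum} and \eqref{eq-sep-eppyr} agree.

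The last step is identical in both: divide $h^*(P;z)=(1-z)^{d+1}\mathrm{Ehr}(P;z)$ by $(1-z)^{d+1}$ in the formal power series ring.
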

\begin{thm}\label{thm-ses-nonneg}
The function $h^{\#}(P;z)$ is a polynomial in $z$ of the form 
\[ h^{\#}(P;z) = h^*(P;z),\] 
whose coefficients are nonnegative integers.
\end{thm}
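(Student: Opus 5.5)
The plan is to read the statement off directly from Theorem \ref{thm-ses-ser}. By the definition of $h^{\#}$ and equation \eqref{eq-ses-h},
\[
h^{\#}(P;z) = (1-z)^{d+2}\mathrm{SE}(P;z) = (1-z)^{d+2}\cdot\frac{h^*(P;z)}{(1-z)^{d+2}} = h^*(P;z)
\]
as formal power series. So everything reduces to known facts about the classical $h^*$-polynomial.

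If one wants to avoid taking Theorem \ref{thm-ses-ser} as a black box, the identity can be reproved in two short steps.

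\begin{enumerate}
\item[(i)] By \eqref{eq-sep-epsum} of Theorem \ref{thm-sep-poly}, $\mathcal H(P;t)=\sum_{i=0}^t \mathcal E(P;i)$. Multiplying $\mathrm{Ehr}(P;z)$ by $1/(1-z)=\sum_{j\ge 0}z^j$ takes the Cauchy product with the all-ones sequence, which produces exactly these partial sums. Hence $\mathrm{SE}(P;z)=\mathrm{Ehr}(P;z)/(1-z)$.
\item[(ii)] Multiplying by $(1-z)^{d+2}$ gives $(1-z)^{d+1}\mathrm{Ehr}(P;z)$, which is $h^*(P;z)$ by definition.
\end{enumerate}

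As a cross-check, \eqref{eq-sep-eppyr} gives $\mathrm{SE}(P;z)=\mathrm{Ehr}(\mathrm{Pyr}\,P;z)$. Since $\mathrm{Pyr}\,P$ is a $(d+1)$-dimensional lattice polytope, this shows $h^{\#}(P;z)=h^*(\mathrm{Pyr}\,P;z)$. This agrees with the known fact that forming a lattice pyramid of height one preserves the $h^*$-polynomial.

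Polynomiality follows because $\mathcal E(P;t)$ is a polynomial of degree $d$ (Theorem \ref{thm-ehr-poly}). The standard rational-generating-function lemma (e.g. \cite{BR15}) then gives $\mathrm{Ehr}(P;z)=h^*(P;z)/(1-z)^{d+1}$ with $h^*$ a polynomial of degree at most $d$. Integrality of the coefficients is immediate: the $\mathcal E(P;t)$ are integers and $(1-z)^{d+1}$ has integer coefficients.

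The only substantive input is nonnegativity of the coefficients, which is Stanley's nonnegativity theorem for $h^*$-vectors of lattice polytopes. I would cite it from \cite{BR15}, where it is proved via half-open decompositions of the cone over $P$ and lattice points in fundamental parallelepipeds. This is the one step not derivable from the paper's earlier results, so I would invoke it rather than reprove it.
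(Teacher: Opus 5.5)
Your proposal is correct and follows essentially the same route as the paper, which likewise obtains $h^{\#}(P;z)=h^*(P;z)$ from \eqref{eq-ses-h} and then cites Stanley's nonnegativity theorem. Your additional remarks re-derive the generating-function identity, check polynomiality and integrality, and give the pyramid cross-check; these only spell out details the paper leaves implicit.
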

Based on this theorem, we refer to $h^{\#}(P;z)$ as the {\itshape $h^{\#}$-polynomial} of $P.$
\begin{thm}\label{thm-ses-coeff}
Let 
\[ h^{\#}(P;z) = h^{\#}_{d+1}z^{d+1}+h^{\#}_{d}z^{d}+\cdots +h^{\#}_{1}z+h^{\#}_{0}.\] 
\begin{enumerate}
\item[\textup{(1)}]
We have $h^{\#}_{d+1} = 0$ and 
\[ h^{\#}_{i} = h^*_{i}\] 
for any $i \in \{ 0,1,\dots,d\}.$ 
Furthermore, 
\begin{align*} 
h^{\#}_0 &= 1, \\ 
\mathrm{vol}\,P &= \frac{1}{d!}\sum_{i = 0}^{d}h^{\#}_{i}, 
\end{align*} 
and 
\[ c = d+1-\deg h^{\#}(P;z).\] 
\item[\textup{(2)}]
The polytope $P$ is reflexive up to translation if and only if 
\[ h^{\#}_{i} = h^\#_{d-i}\] 
for any $i \in \{ 0,1,\dots,d\}.$
\end{enumerate}
\end{thm}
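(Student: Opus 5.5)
The plan is to derive everything from Theorem \ref{thm-ses-nonneg}, which gives the identity $h^{\#}(P;z) = h^*(P;z)$, and then quote the classical facts about the $h^*$-polynomial.

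For part (1), first recall that $h^*(P;z)$ has degree at most $d$. This follows because $\mathcal E(P;t)$ is a polynomial of degree $d$ (Theorem \ref{thm-ehr-poly}), so the rational function $\mathrm{Ehr}(P;z)$ has numerator of degree at most $d$ over $(1-z)^{d+1}$. Comparing coefficients in $h^{\#}(P;z) = h^*(P;z)$ then yields $h^{\#}_{d+1}=0$ and $h^{\#}_i = h^*_i$ for $0\le i\le d$.

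The remaining claims in part (1) transfer directly from $h^*$:
\begin{itemize}
\item Evaluating at $z=0$ gives $h^{\#}_0 = h^*_0 = \mathcal E(P;0) = 1$.
\item The identity $\sum_i h^*_i = d!\,\mathrm{vol}\,P$ is the standard consequence of comparing leading coefficients, since $\mathcal E(P;t) = \sum_i h^*_i\binom{t+d-i}{d}$ has leading coefficient $\frac{1}{d!}\sum_i h^*_i = c_d = \mathrm{vol}\,P$.
\item The codegree formula $c = d+1-\deg h^*(P;z)$ was recalled in the introduction. Since $\deg h^{\#} = \deg h^*$, it holds with $h^{\#}$ in place of $h^*$.
\end{itemize}
If one wants an argument intrinsic to $\mathcal H$, the volume identity can alternatively be read off from the leading coefficient $a_{d+1} = \mathrm{vol}\,P/(d+1)$ in Theorem \ref{thm-sep-poly}. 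Expanding $\mathrm{SE}(P;z) = h^{\#}(P;z)/(1-z)^{d+2}$ gives $\mathcal H(P;t) = \sum_i h^{\#}_i \binom{t+d+1-i}{d+1}$, whose leading coefficient is $\frac{1}{(d+1)!}\sum_i h^{\#}_i$. This gives the same identity.

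For part (2), combine $h^{\#}_i = h^*_i$ with Hibi's palindromic theorem. That theorem states that a lattice polytope $P$ is reflexive up to a lattice translation if and only if $h^*_i = h^*_{d-i}$ for all $0\le i\le d$. I would cite this via \cite{BR15}, where it appears as a corollary of Ehrhart--Macdonald reciprocity: palindromicity of $h^*$ is equivalent to $\mathcal E(P^\circ;t+1) = \mathcal E(P;t)$, i.e.\ codegree $1$ with a unique interior point and all facets at lattice distance one.

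If a proof internal to the summatory setting is preferred, one can use Theorem \ref{thm-sep-recip} together with Theorem \ref{thm-ses-ser}. Palindromicity of $h^{\#}$ of degree $d$ is equivalent to the functional equation $\mathrm{SE}(P;1/z) = (-1)^{d+2} z^{2}\,\mathrm{SE}(P;z)$. Via reciprocity, this in turn is equivalent to $\mathcal H(P^\circ;t) = \mathcal H(P;t-1)$. Differencing in $t$ then recovers the Ehrhart-level condition $\mathcal E(P^\circ;t) = \mathcal E(P;t-1)$.

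The main obstacle is only bookkeeping in part (2): the index range $0\le i\le d$ is correct only because $h^{\#}_{d+1}=0$. The symmetry must be stated for degree $d$, not $d+1$, even though $h^{\#}$ arises from a denominator $(1-z)^{d+2}$. I would check this carefully, since a naive application of the Stanley reciprocity for $\mathrm{Pyr}\,P$ would suggest symmetry about $(d+1)/2$ instead.
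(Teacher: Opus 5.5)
Your proposal is correct and follows the paper's proof. Both reduce everything to the identity $h^{\#}(P;z)=h^*(P;z)$ from Theorem \ref{thm-ses-nonneg}, then invoke the standard facts about $h^*$ (degree at most $d$, $h^*_0=1$, $\sum_i h^*_i=d!\,\mathrm{vol}\,P$, and the codegree formula) and Hibi's palindromic theorem. Your optional ``internal'' arguments, via the leading coefficient $a_{d+1}$ and via $\mathcal H(P^\circ;t)=\mathcal H(P;t-1)$, are valid extras; the second still ultimately relies on the classical characterization of reflexivity by $\mathcal E(P^\circ;t)=\mathcal E(P;t-1)$.
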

\begin{thm}\label{thm-ses-recip}
We have 
\begin{align} 
\mathrm{SE}(P^\circ;z) &= \frac{(-1)^{d+1}}{1-z}\mathrm{Ehr}\left( P;\frac{1}{z}\right) \label{eq-sesi-es} \\ 
&= \frac{(-1)^{d}}{z}\mathrm{SE}\left( P;\frac{1}{z}\right). \label{eq-sesi-ses} 
\end{align} 
\end{thm}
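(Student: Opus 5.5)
Theorem \ref{thm-ses-recip} follows from Theorem \ref{thm-sep-recip} together with Theorem \ref{thm-ses-ser} and Ehrhart--Macdonald reciprocity for rational generating functions, in its series form as in \cite{BR15}. For a lattice polytope $Q$ of dimension $e$ that reciprocity reads
\[ \mathrm{Ehr}(Q^\circ;z) = (-1)^{e+1}\mathrm{Ehr}\left(Q;\frac{1}{z}\right).\]
We apply it to $Q = \mathrm{Pyr}\,P$, which is a lattice polytope of dimension $d+1$.

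\textbf{Step 1.} By \eqref{eq-sepi-epi},
\[\mathrm{SE}(P^\circ;z) = \sum_{t \geq 1}\mathcal E((\mathrm{Pyr}\,P)^\circ;t+1)z^t = \frac{1}{z}\sum_{s \geq 2}\mathcal E((\mathrm{Pyr}\,P)^\circ;s)z^s.\]
The $s=1$ term vanishes. Indeed, $\mathrm{Pyr}\,P$ has a facet $P\times\{0\}$ and an apex at height $1$, so every point of $(\mathrm{Pyr}\,P)^\circ$ has last coordinate strictly between $0$ and $1$, and hence $(\mathrm{Pyr}\,P)^\circ$ contains no lattice points. Therefore
\[\mathrm{SE}(P^\circ;z) = \frac{1}{z}\mathrm{Ehr}((\mathrm{Pyr}\,P)^\circ;z).\]

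\textbf{Step 2.} Applying reciprocity with $e = d+1$ and then Theorem \ref{thm-ses-ser} gives
\[\mathrm{SE}(P^\circ;z) = \frac{(-1)^{d}}{z}\mathrm{Ehr}\left(\mathrm{Pyr}\,P;\frac{1}{z}\right) = \frac{(-1)^{d}}{z}\mathrm{SE}\left(P;\frac{1}{z}\right).\]
This is \eqref{eq-sesi-ses}.

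\textbf{Step 3.} To obtain \eqref{eq-sesi-es}, substitute \eqref{eq-ses-es} at $1/z$:
\[\mathrm{SE}\left(P;\frac{1}{z}\right) = \frac{1}{1-1/z}\mathrm{Ehr}\left(P;\frac{1}{z}\right) = \frac{z}{z-1}\mathrm{Ehr}\left(P;\frac{1}{z}\right).\]
Multiplying by $(-1)^d/z$ gives $\frac{(-1)^{d}}{z-1}\mathrm{Ehr}(P;1/z) = \frac{(-1)^{d+1}}{1-z}\mathrm{Ehr}(P;1/z)$, as claimed.

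\textbf{Main obstacle.} The main care point is interpretive. All identities are between rational functions, so $\mathrm{Ehr}(P;1/z)$ means the rational function $h^*(P;1/z)/(1-1/z)^{d+1}$, not a convergent series. The reciprocity for series must be invoked in this sense; alternatively, it can be derived from the polynomial statement \eqref{eq-sepi-sep} via Popoviciu/Stanley's lemma, namely $\sum_{t\geq1} f(-t)z^t = -\sum_{t\geq0} f(t)z^{-t}$ as rational functions for a polynomial $f$. That alternative route would run as follows. Apply the lemma to $f(t) = (-1)^{d+1}\mathcal H(P;t-1)$, and reindex using the shift $t\mapsto t+1$. Then handle the resulting boundary term, which vanishes because $\mathcal H(P;-1) = 0$; this holds since $\mathcal E(\mathrm{Pyr}\,P;-1) = (-1)^{d+1}\mathcal E((\mathrm{Pyr}\,P)^\circ;1) = 0$ by Step 1.
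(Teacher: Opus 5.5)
Your proposal is correct and follows the paper's own proof. Both arguments rewrite $\mathrm{SE}(P^\circ;z)$ via \eqref{eq-sepi-epi} as $\frac{1}{z}\mathrm{Ehr}((\mathrm{Pyr}\,P)^\circ;z)$, using that $(\mathrm{Pyr}\,P)^\circ$ contains no lattice points (which you justify more explicitly than the paper does), then apply series reciprocity to the $(d+1)$-dimensional pyramid and finish with Theorem~\ref{thm-ses-ser}; your alternative Popoviciu-lemma route, including the vanishing boundary term $\mathcal H(P;-1)=0$, is also sound.
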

\begin{thm}\label{thm-se-inv}
Both $\mathcal H(P;t)$ and $\mathrm{SE}(P;z)$ are invariant under lattice equivalence.
\end{thm}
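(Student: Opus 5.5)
The statement to prove is Theorem \ref{thm-se-inv}. The plan is to reduce everything to the known invariance of the classical Ehrhart polynomial, using the identity \eqref{eq-sep-epsum} of Theorem \ref{thm-sep-poly}.

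Suppose $Q = \phi(P)$, where $\phi(x) = Ux+b$ with $U$ a unimodular (invertible integral) linear map and $b \in \mathbb Z^n$. For every integer $t \geq 0$ we have $tQ = U(tP)+tb$. The map $x \mapsto Ux+tb$ is a bijection of $\mathbb Z^n$ onto itself, and it sends $tP$ onto $tQ$. Hence $\mathcal E(Q;t) = \mathcal E(P;t)$ for all $t \geq 0$; this is the standard invariance of the Ehrhart polynomial, and I would either cite it or include this one-line argument. Note also that $\dim Q = \dim P = d$.

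Applying \eqref{eq-sep-epsum} to both polytopes gives
\[
\mathcal H(Q;t) = \sum_{i=0}^{t}\mathcal E(Q;i) = \sum_{i=0}^{t}\mathcal E(P;i) = \mathcal H(P;t)
\]
for every $t \geq 0$. Since both sides are polynomials that agree at infinitely many integers, they coincide as polynomials. The series $\mathrm{SE}(Q;z)$ and $\mathrm{SE}(P;z)$ are built from these same coefficients, so they are equal. Alternatively, \eqref{eq-ses-es} writes each series as $(1-z)^{-1}$ times the corresponding Ehrhart series, and the Ehrhart series agree.

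As a sanity check of the definition itself, one can also argue directly from horizontal gigantures. The map $G = iP+a \mapsto \phi_t(G) = iQ+Ua$, where $\phi_t(x) = Ux+tb$, is a bijection between the horizontal gigantures of $P$ in $tP$ and those of $Q$ in $tQ$. Indeed, $\phi_t(iP+a) = iUP+Ua+tb = iQ+(Ua+(t-i)b)$, which again has an integer translation vector, and $\phi_t^{-1}$ gives the inverse correspondence. The only delicate point is whether a giganture could have two representations $iP+a$ with different data. This cannot happen because $i$ is determined by volume or diameter when $d>0$. When $d=0$ the dilation factor $i$ is ambiguous, but the statement is then trivial: $\mathcal H(P;t) = t+1$ by \eqref{eq-sep-epsum}. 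Relying on \eqref{eq-sep-epsum} avoids this subtlety altogether, so I expect no real obstacle; the argument is routine.
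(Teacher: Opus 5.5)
Your proposal is correct and is essentially the paper's own proof, which likewise combines \eqref{eq-sep-epsum}, \eqref{eq-ses-es}, and the invariance of $\mathcal E(P;t)$ and $\mathrm{Ehr}(P;z)$ under lattice equivalence. The direct bijection of gigantures is an optional extra; in it, the image of $iP+a$ should read $iQ+(Ua+(t-i)b)$ rather than $iQ+Ua$, as your own computation shows.
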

\begin{thm}\label{thm-se-pie}
Both $\mathcal H(P;t)$ and $\mathrm{SE}(P;z)$ satisfy the valuation property on the algebra of convex lattice polytopes. 
That is, they define linear functionals on the Grothendieck group of convex lattice polytopes.
\end{thm}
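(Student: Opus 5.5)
We prove the statement using only the identity \eqref{eq-sep-epsum}, $\mathcal H(P;t)=\sum_{i=0}^{t}\mathcal E(P;i)$, together with the fact that $\mathrm{SE}(P;z)=\frac{1}{1-z}\mathrm{Ehr}(P;z)$ (Theorem \ref{thm-ses-ser}). Both properties will then follow termwise from the corresponding classical properties of $\mathcal E(P;t)$ and $\mathrm{Ehr}(P;z)$.

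\textbf{Valuation property.} Let $P_1,\dots,P_k$ be convex lattice polytopes and $m_1,\dots,m_k\in\mathbb Z$ with $\sum_j m_j[P_j]=0$ in the polytope algebra, i.e.\ $\sum_j m_j\mathbf 1_{P_j}=0$ as functions on $\mathbb R^n$. Dilating by $i\geq 0$ gives $\sum_j m_j\mathbf 1_{iP_j}=0$. Summing over $\mathbb Z^n$ yields
\[
\sum_j m_j\,\mathcal E(P_j;i)=0 \quad\text{for every } i\geq 0 .
\]
This is the standard fact that $\mathcal E(\cdot\,;i)$ is a valuation, and hence factors through the Grothendieck group. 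Summing over $i=0,\dots,t$ and applying \eqref{eq-sep-epsum} to each $P_j$ gives $\sum_j m_j\mathcal H(P_j;t)=0$ for all integers $t\geq 0$. Since each $\mathcal H(P_j;t)$ is a polynomial by Theorem \ref{thm-sep-poly}, the relation holds as an identity of polynomials. Multiplying by $z^t$ and summing gives $\sum_j m_j\mathrm{SE}(P_j;z)=0$. Equivalently, we may apply $\frac{1}{1-z}$ to $\sum_j m_j\mathrm{Ehr}(P_j;z)=0$ via \eqref{eq-ses-es}.

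\textbf{Linearity on the Grothendieck group.} Define $\mathcal H([P];t):=\mathcal H(P;t)$ and extend by $\mathbb Z$-linearity. The relation just proved is exactly the statement that this extension is well defined. The same holds for $\mathrm{SE}$.

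\textbf{Main subtlety.} The delicate point is that the valuation property must be checked on the relations defining the Grothendieck group, including the identification of translates (and, where relevant, of lattice-equivalent polytopes). It is not enough to check inclusion--exclusion for two polytopes whose union is convex. For the former, note that dilation does not commute with a lattice translation in the naive way: $t(P+a)=tP+ta$ with $ta\in\mathbb Z^n$. Thus $\mathcal E$, and therefore $\mathcal H$, is still translation invariant; in general, invariance under lattice equivalence is supplied by Theorem \ref{thm-se-inv}. For the indicator relations, we use the fact that $\mathbf 1_{tQ}(x)=\mathbf 1_{Q}(x/t)$ for $t>0$, so dilation preserves linear relations among indicator functions. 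The case $t=0$ holds because each polytope contributes $1$ and the relation evaluated at the origin gives $\sum_j m_j=0$ when all $P_j$ contain a common point; otherwise we use the Euler characteristic valuation, which takes the value $1$ on every nonempty polytope.
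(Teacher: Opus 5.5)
Your proof is correct and follows the same route as the paper: $\mathcal H$ inherits the valuation property from $\mathcal E$ through the partial-sum identity \eqref{eq-sep-epsum}, and $\mathrm{SE}$ inherits it from $\mathrm{Ehr}$ through \eqref{eq-ses-es}. The only difference is that you check arbitrary linear relations among indicator functions (plus lattice translations and the $t=0$ term), where the paper writes out only the two-polytope inclusion--exclusion. Your aside that two-set inclusion--exclusion is ``not enough'' is overstated---for lattice polytopes McMullen's theorem shows it implies the full inclusion--exclusion principle, and under the generators-and-relations definition of the Grothendieck group it is the defining relation---but nothing in your argument depends on that remark.
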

\section{Proofs of Main Results}
In this section, we prove Theorems \ref{thm-sep-recip}--\ref{thm-se-pie}. 
\begin{lem}\label{lem-hori-sub}
Let $t > 0$ be an integer. 
For any $i \in \{ 0,1,\dots,t-1\},$ the number of horizontal gigantures of $P$ in $tP^\circ$ with scale factor $i$ equals $\mathcal E(P^\circ;t-i).$
\end{lem}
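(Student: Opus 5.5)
We need a bijection between horizontal gigantures $iP+a\subset tP^\circ$ and lattice points $a\in\mathbb Z^n$ with $a\in (t-i)P^\circ$. The plan is to fix $i\in\{0,\dots,t-1\}$, so that a horizontal giganture with scale factor $i$ is exactly a set $G=iP+a$ with $a\in\mathbb Z^n$. For $i\ge 1$ the map $a\mapsto iP+a$ is injective, since translates of a fixed nonempty bounded set coincide only when the translation vectors agree. For $i=0$ the giganture is the single point $a$, which is again injective in $a$. It therefore suffices to show that for $a\in\mathbb Z^n$ we have
\[ iP+a\subset tP^\circ \iff a\in (t-i)P^\circ. \]
Counting lattice points of $(t-i)P^\circ$ then gives $\mathcal E(P^\circ;t-i)$. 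Here we read $\mathcal E(P^\circ;s)$ as $\#(sP^\circ\cap\mathbb Z^n)$ for $s\ge 1$, which is the convention behind $\mathrm{Ehr}(P^\circ;z)$; note $t-i\ge 1$.

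To prove the equivalence, write points of $P$ as $x\in P$ and interpret "interior" relative to the affine hull. The basic identity is $tP=iP+(t-i)P$ (Minkowski sum, valid since $P$ is convex and $i,t-i\ge 0$). Relatively, $tP^\circ=iP+(t-i)P^\circ$ holds when $t-i>0$. For the direction ($\Leftarrow$), if $a\in(t-i)P^\circ$ then $iP+a\subset iP+(t-i)P^\circ\subseteq tP^\circ$. The last inclusion follows because for $x\in P$ and $y\in P^\circ$, the point $ix+(t-i)y=t\bigl(\tfrac{i}{t}x+\tfrac{t-i}{t}y\bigr)$ is $t$ times a convex combination of a point of $P$ and a relative interior point with positive weight on the interior point, and such a combination lies in $P^\circ$.

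For ($\Rightarrow$), the step needing care, I would use a facet description. Let $P=\{x\in\mathrm{aff}\,P: \langle u_j,x\rangle\le b_j\}$, so that $P^\circ$ corresponds to strict inequalities within $\mathrm{aff}\,P$. Then $sP$ is described by $\langle u_j,x\rangle\le sb_j$ inside $\mathrm{aff}(sP)$. If $iP+a\subset tP^\circ$, choose for each $j$ a point $x_j\in P$ attaining $\max_P\langle u_j,\cdot\rangle=b_j$. This gives $ib_j+\langle u_j,a\rangle<tb_j$, that is, $\langle u_j,a\rangle<(t-i)b_j$. We also need $a\in\mathrm{aff}((t-i)P)$. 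This follows because $ip+a\in\mathrm{aff}(tP)$ for any $p\in P$, together with the linear-space bookkeeping of the affine hulls, which are parallel and scale compatibly. Hence $a\in(t-i)P^\circ$.

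I expect the main (minor) obstacle to be this affine-hull bookkeeping when $d<n$, since $\mathrm{aff}(sP)$ moves with $s$ if $0\notin\mathrm{aff}\,P$. I would handle it by writing $\mathrm{aff}\,P=p_0+L$. Then $\mathrm{aff}(sP)=sp_0+L$, and $ip_0+a\in tp_0+L$ gives $a\in(t-i)p_0+L$, as required. The case $i=0$ is trivial: the condition is simply $a\in tP^\circ$.
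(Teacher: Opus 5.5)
Your proof is correct and takes the same route as the paper. Both reduce the count to the equivalence $iP+a\subset tP^\circ \iff a\in(t-i)P^\circ$ for $a\in\mathbb Z^n$, and then count the lattice points of $(t-i)P^\circ$. The paper just asserts this equivalence ``by the convexity of $P$.'' You verify it explicitly: injectivity of $a\mapsto iP+a$, the facet-inequality argument, and the affine-hull bookkeeping when $d<n$.
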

\begin{proof}
Let $a \in \mathbb Z^n.$ 
By the convexity of $P,$ a giganture $iP+a$ is contained in $tP^\circ$ if and only if the translation vector $a$ lies in the relative interior of $(t-i)P.$ 
Therefore, the desired number equals 
\[\#\{ a \in \mathbb Z^n \mid iP+a \subset tP^\circ\} = \# ((t-i)P^\circ\cap\mathbb Z^n) = \mathcal E(P^\circ;t-i). \qedhere\] 
\end{proof}
\begin{proof}[Proof of Theorem \ref{thm-sep-recip}]
By Lemma \ref{lem-hori-sub}, the Ehrhart--Macdonald reciprocity law (see \cite[Theorem 4.1]{BR15}), and the identity \eqref{eq-sep-eppyr}, we obtain 
\begin{align*} 
\mathcal H(P^\circ;t) &= \sum_{i = 0}^{t-1}\mathcal E(P^\circ;t-i) \\ 
&= \sum_{i = 1}^{t}\mathcal E(P^\circ;i) \\ 
&= \mathcal E((\mathrm{Pyr}\,P)^\circ;t+1) \\ 
&= (-1)^{d+1}\mathcal E(\mathrm{Pyr}\,P;-t-1) \\ 
&= (-1)^{d+1}\mathcal H(P;-t-1). \qedhere 
\end{align*} 
\end{proof}
\begin{rmk}
Let $\mathcal H^+(P^\circ;t)$ denote the number of nondegenerate horizontal gigantures of $P$ in $tP^\circ$ for each integer $t > 0.$ 
Then, we obtain 
\[\mathcal H^+(P^\circ;t) = \mathcal E((\mathrm{Pyr}\,P)^\circ;t) = (-1)^{d+1}\mathcal H^+(P;1-t)\] 
by the same argument as above.
\end{rmk}
\begin{proof}[Proof of Theorem \ref{thm-ses-ser}]
By the identities \eqref{eq-sep-eppyr} and 
\begin{equation} 
\mathrm{Ehr}(\mathrm{Pyr}\,P;z) = \frac{1}{1-z}\mathrm{Ehr}(P;z) \label{eq-esp-es} 
\end{equation} 
(see \cite[Theorem 2.4]{BR15}), we obtain 
\begin{align*} 
\mathrm{SE}(P;z) &= \sum_{t = 0}^{\infty}\mathcal E(\mathrm{Pyr}\,P;t)z^t \\ 
&= \mathrm{Ehr}(\mathrm{Pyr}\,P;z) \\ 
&= \frac{1}{1-z}\mathrm{Ehr}(P;z) \\ 
&= \frac{h^*(P;z)}{(1-z)^{d+2}}. \qedhere 
\end{align*} 
\end{proof}
\begin{proof}[Proof of Theorem \ref{thm-ses-nonneg}]
Combining the identity \eqref{eq-ses-h} and Stanley's nonnegativity theorem for $h^*(P;z)$ (see \cite[Theorem 3.12]{BR15}), we obtain the desired result.
\end{proof}
\begin{proof}[Proof of Theorem \ref{thm-ses-coeff}]
\begin{enumerate}
\item[(1)]
Combining Theorem \ref{thm-ses-nonneg} and the known results for $h^*(P;z),$ we obtain the desired result.
\item[(2)]
Combining Theorem \ref{thm-ses-nonneg} and Hibi's palindromic theorem (see \cite[Theorem 4.6]{BR15}), we obtain the desired result. \qedhere
\end{enumerate}
\end{proof}
\begin{proof}[Proof of Theorem \ref{thm-ses-recip}]
By the identity \eqref{eq-sepi-epi}, the Ehrhart--Macdonald reciprocity law 
\[\mathrm{Ehr}(P^\circ;z) = (-1)^{d+1}\mathrm{Ehr}\left( P;\frac{1}{z}\right)\] 
(see \cite[Theorem 4.4]{BR15}), and the identities \eqref{eq-esp-es} and \eqref{eq-ses-es}, we have 
\begin{align*} 
\mathrm{SE}(P^\circ;z) &= \sum_{t = 1}^{\infty}\mathcal E((\mathrm{Pyr}\,P)^\circ;t+1)z^t \\ 
&= \frac{1}{z}\sum_{t = 2}^{\infty}\mathcal E((\mathrm{Pyr}\,P)^\circ;t)z^t \\ 
&= \frac{1}{z}\sum_{t = 1}^{\infty}\mathcal E((\mathrm{Pyr}\,P)^\circ;t)z^t \\ 
&= \frac{1}{z}\mathrm{Ehr}((\mathrm{Pyr}\,P)^\circ;z) \\ 
&= \frac{(-1)^{d+2}}{z}\mathrm{Ehr}\left(\mathrm{Pyr}\,P;\frac{1}{z}\right) \\ 
&= \frac{(-1)^{d}}{z}\cdot\frac{1}{1-z^{-1}}\mathrm{Ehr}\left( P;\frac{1}{z}\right) \\ 
&= \frac{(-1)^{d}}{z}\mathrm{SE}\left( P;\frac{1}{z}\right), 
\end{align*} 
which implies \eqref{eq-sesi-es}. 
Here, the third equality holds, since $(\mathrm{Pyr}\,P)^\circ$ contains no lattice points.
\end{proof}
\begin{rmk}
We define $\mathrm{SE}^+(P;z)$ and $\mathrm{SE}^+(P^\circ;z)$ by 
\[\mathrm{SE}^+(P;z) := \sum_{t = 0}^{\infty}\mathcal H^+(P;t)z^t \quad\text{and}\quad \mathrm{SE}^+(P^\circ;z) := \sum_{t = 1}^{\infty}\mathcal H^+(P^\circ;t)z^t.\] 
Then, we obtain 
\[\mathrm{SE}^+(P;z) = z\cdot\mathrm{Ehr}(\mathrm{Pyr}\,P;z) = \frac{z}{1-z}\mathrm{Ehr}(P;z) = \frac{z\cdot h^*(P;z)}{(1-z)^{d+2}}\] 
and 
\[\mathrm{SE}^+(P^\circ;z) = (-1)^{d}z\cdot\mathrm{SE}^+\left( P;\frac{1}{z}\right) = \frac{(-1)^{d+1}z}{1-z}\mathrm{Ehr}\left( P;\frac{1}{z}\right)\] 
by the same argument as above.
\end{rmk}
\begin{proof}[Proof of Theorem \ref{thm-se-inv}]
Combining \eqref{eq-sep-epsum}, \eqref{eq-ses-es}, and the fact that $\mathcal E(P;t)$ and $\mathrm{Ehr}(P;z)$ are invariant under lattice equivalence, we obtain the desired result.
\end{proof}
\begin{proof}[Proof of Theorem \ref{thm-se-pie}]
Let $P,$ $Q \subset \mathbb R^n$ be convex lattice polytopes such that $P\cup Q$ and $P\cap Q$ are also convex lattice polytopes. 
Combining \eqref{eq-sep-epsum} and the valuation property for $\mathcal E(P;t)$ and $\mathrm{Ehr}(P;z),$ we obtain 
\begin{align*} 
\mathcal H(P\cup Q;t) &= \mathcal H(P;t)+\mathcal H(Q;t)-\mathcal H(P\cap Q;t) 
\intertext{and} 
\mathrm{SE}(P\cup Q;z) &= \mathrm{SE}(P;z)+\mathrm{SE}(Q;z)-\mathrm{SE}(P\cap Q;z). \qedhere 
\end{align*} 
\end{proof}
\begin{rmk}
Similarly to Theorems \ref{thm-se-inv} and \ref{thm-se-pie}, both $\mathcal H^+(P;t)$ and $\mathrm{SE}^+(P;z)$ as well are invariant under lattice equivalence and satisfy the valuation property.
\end{rmk}
\section{Examples}\label{sec-eg}
In this section, we provide a few examples of summatory Ehrhart polynomials and series of convex lattice polytopes.
\begin{eg}
In $\mathbb R,$ let $P = [0,1].$ 
Then 
\[\mathcal H(P;t) = \sum_{i = 0}^{t}\mathcal E(P;i) = \sum_{i = 0}^{t}(i+1) = \sum_{i = 1}^{t+1}i = \frac{1}{2}(t+1)(t+2)\] 
and 
\[\mathcal H(P^\circ;t) = (-1)^2\mathcal H(P;-t-1) = \frac{1}{2}t(t-1).\] 
Furthermore, 
\[\mathrm{SE}(P;z) = \frac{1}{(1-z)^3}\] 
and 
\[\mathrm{SE}(P^\circ;z) = \frac{-1}{z}\cdot\frac{1}{(1-z^{-1})^3} = \frac{z^2}{(1-z)^3}\] 
by \eqref{eq-sesi-ses}.
\end{eg}
\begin{eg}
Let $a,$ $b > 0$ be integers. 
In $\mathbb R^2,$ let $P$ be the triangle with vertices $(0,0),$ $(a,0),$ and $(0,b),$ let $Q$ be the triangle with vertices $(a,b),$ $(a,0),$ and $(0,b),$ and let $R = [0,a]\times [0,b].$ 
Then 
\begin{align*} 
\mathcal H(R;t) &= \mathcal H(P\cup Q;t) \\ 
&= \mathcal H(P;t)+\mathcal H(Q;t)-\mathcal H(P\cap Q;t) \\ 
&= 2\mathcal H(P;t)-\mathcal H(P\cap Q;t) 
\end{align*} 
by Theorem \ref{thm-se-pie}. 
In addition, 
\begin{align*} 
\mathcal H(R;t) &= \sum_{i = 0}^{t}\mathcal E(R;i) \\ 
&= \sum_{i = 0}^{t}(ai+1)(bi+1) \\ 
&= ab\sum_{i = 0}^{t}i^2+(a+b)\sum_{i = 0}^{t}i+\sum_{i = 0}^{t}1 \\ 
&= ab\cdot\frac{1}{6}t(t+1)(2t+1)+(a+b)\cdot\frac{1}{2}t(t+1)+(t+1) \\ 
&= \frac{1}{6}(t+1)(2abt^2+(ab+3a+3b)t+6) 
\end{align*} 
and  
\begin{align*} 
\mathcal H(R^\circ;t) &= (-1)^3\mathcal H(R;-t-1) \\ 
&= \frac{1}{6}t(2abt^2+3(ab-a-b)t+ab-3a-3b+6) 
\end{align*} 
by \eqref{eq-sepi-sep}. 
If $a$ and $b$ are coprime, then 
\[\mathcal H(P\cap Q;t) = \frac{1}{2}(t+1)(t+2),\] 
since $P\cap Q$ is the line segment that contains only the lattice points $(0,0)$ and $(a,b)$ at its endpoints. 
Therefore, in this case, we have 
\begin{align*} 
\mathcal H(P;t) &= \frac{1}{2}(\mathcal H(R;t)+\mathcal H(P\cap Q;t)) \\ 
&= \frac{1}{2}\left(\frac{1}{6}(t+1)(2abt^2+(ab+3a+3b)t+6)+\frac{1}{2}(t+1)(t+2)\right) \\ 
&= \frac{1}{12}(t+1)(2abt^2+(ab+3a+3b+3)t+12) 
\end{align*} 
and 
\begin{align*} 
\mathcal H(P^\circ;t) &= (-1)^3\mathcal H(P;-t-1) \\ 
&= \frac{1}{12}t(2abt^2+3(ab-a-b-1)t+(a-3)(b-3)) 
\end{align*} 
by \eqref{eq-sepi-sep}. 
Furthermore, in the case where $a = b = 1,$ we have 
\begin{align*} 
\mathrm{SE}(P;z) &= \frac{1}{(1-z)^4}, \\ 
\mathrm{SE}(R;z) &= \frac{1+z}{(1-z)^4}, \\ 
\mathrm{SE}(P^\circ;z) &= \frac{(-1)^2}{z}\cdot\frac{1}{(1-z^{-1})^4} = \frac{z^3}{(1-z)^4}, 
\intertext{and}
\mathrm{SE}(R^\circ;z) &= \frac{(-1)^2}{z}\cdot\frac{1+z^{-1}}{(1-z^{-1})^4} = \frac{z^2(1+z)}{(1-z)^4} 
\end{align*} 
by \eqref{eq-sesi-ses}.
\end{eg}


\begin{thebibliography}{9}
\bibitem{BR15}
M.~Beck and S.~Robins, {\itshape Computing the Continuous Discretely: Integer Point Enumeration in Polyhedra}, 2nd ed., Undergraduate Texts in Mathematics, Springer, New York, 2015. 
\bibitem{Hir22}
T.~Hirotsu, Normal-sized hypercuboids in a given hypercube, preprint, arXiv:2211.15342.
\bibitem{Hir25}
T.~Hirotsu, Average-sized miniatures and normal-sized miniatures of lattice polytopes, preprint, arXiv:2501.00459.
\bibitem{Hir26}
T.~Hirotsu, Horizontal miniatures and normal-sized miniatures of convex lattice polytopes, preprint, arXiv:2605.20905.
\end{thebibliography}
\end{document}